\newtheorem*{rep@theorem}{\rep@title}
\newcommand{\newreptheorem}[2]{%
\newenvironment{rep#1}[1]{%
 \def\rep@title{#2 \ref{##1}}%
 \begin{rep@theorem}}%
 {\end{rep@theorem}}}
\theoremstyle{plain}
\newtheorem{teor}{Theorem}[section]
\newtheorem{prop}[teor]{Proposition}
\theoremstyle{definition}
\newtheorem{deft}[teor]{Definition}
\theoremstyle{remark}
\newtheorem{oss}[teor]{Remark}
\DeclareMathOperator\bbH{\mathbb{H}}
\DeclareMathOperator\bbP{\mathbb{P}}
\DeclareMathOperator\bbR{\mathbb{R}}
\DeclareMathOperator\calB{\mathcal{B}}
\DeclareMathOperator\calE{\mathcal{E}}
\DeclareMathOperator\calH{\mathcal{H}}
\DeclareMathOperator\calM{\mathcal{M}}
\DeclareMathOperator\calP{\mathcal{P}}
\DeclareMathOperator\scrF{\mathscr{F}}
\begin{document}

\title[Entropy rigidity for foliations by convex projective manifolds]{Entropy rigidity for foliations by strictly convex projective manifolds}

\author[A. Savini]{A. Savini}
\address{Section de Math\'ematiques, University of Geneva, Rue du Li\`evre 2, 1227 Geneva, Switzerland}
\email{Alessio.Savini@unige.ch}

\date{\today.\ \copyright{\ A. Savini 2020}. The author was partially supported by the FNS grant no. 200020-192216.}

\begin{abstract}

Let $N$ be a compact manifold with a foliation $\mathscr{F}_N$ whose leaves are compact strictly convex projective manifolds. Let $M$ be a compact manifold with a foliation $\mathscr{F}_M$ whose leaves are compact hyperbolic manifolds of dimension bigger than or equal to $3$. Suppose we have a foliation-preserving homeomorphism $f:(N,\mathscr{F}_N) \rightarrow (M,\mathscr{F}_M)$ which is $C^1$-regular when restricted to leaves. In the previous situation there exists a well-defined notion of foliated volume entropies $h(N,\scrF_N)$ and $h(M,\scrF_M)$ and it holds $h(M,\scrF_M) \leq h(N,\scrF_N)$. Additionally, if equality holds, then the leaves must be homothetic. 
 
\end{abstract}
  
\maketitle

\section{Introduction}

Natural maps revealed a very powerful tool in rigidity issues. For instance, inspired by the barycenter method introduced by Douady and Earle \cite{douady:earle}, Besson, Courtois and Gallot \cite{bcg95,bcg96,bcg98} defined natural maps to prove the minimal entropy conjecture for locally symmetric manifolds of rank one. Several applications of natural maps followed this striking result. For instance Francaviglia and Klaff \cite{franc06:articolo} and Francaviglia and the author \cite{savini:articolo,savini2:articolo} applied natural maps to study the $\textup{PO}(m,1)$-representation space of a real hyperbolic lattice $\Gamma \leq \textup{PO}(n,1)$, with $m \geq n \geq 3$. 

Other relevant applications of natural maps were obtained by Storm \cite{St06} in the proof of the minimal entropy conjecture for non uniform rank-one lattices. In the same spirit, Connell and Farb showed it for products of rank-one lattices \cite{connellfarb1}. They also extended the Degree Theorem for higher rank locally symmetric manifolds \cite{connellfarb2,connellfarb4}. A suitable adaptation of the previous results to the context of continuous foliations were studied by Boland and Connell \cite{boland:connell} and by Connell and Li \cite{connell:li}.

Recently natural maps have been exploited for studying measurable cocycles of hyperbolic lattices with values into rank-one Lie groups. The author \cite{savini:natural} introduced the notion of natural map associated to a measurable cocycle with an essentially unique map whose slices are atomless. The existence of such a map allows to defined the notion of natural volume and to state a rigidity result for maximal volume cocycles, in the spirit of Margulis \cite{margulis:ext} and Zimmer \cite{zimmer:annals}. 

In this paper we want to apply natural maps to the study of foliations by strictly convex projective manifolds. A manifold admits a strictly convex projective structure if it can be written as the quotient of a relatively compact domain in an affine chart of $\bbP^n(\mathbb{R})$ modulo a group of projective transformations acting freely and properly discontinuously. Thanks to the pioneering work by Benoist \cite{Ben00,Ben04,Ben06}, the interest in the study of convex projective structures grew rapidly. Inspired by the work done by Boland and Newberger \cite{boland:newberger} for Finsler manifolds, Adeboye, Bray and Constantine \cite{ABC,BC} proved an entropy rigidity result for strictly convex projective manifolds with finite volume. They showed that the volume entropy rescaled by the eccentricity factor associated to the strictly convex projective structure attains its minimum value if and only if the structure is actually hyperbolic. Notice that one has to suppose the existence of a hyperbolic structure a priori, since there exist strictly convex projective manifolds which do not admit any hyperbolic structure (see for instance Benoist \cite{Ben06} and Kapovich \cite{Kap07}).

Our purpose here is to study the same problem but in term of foliations. More precisely we are going to prove the following 

\begin{teor}\label{teor:foliation}
Let $(N,\mathscr{F}_N)$ be a compact manifold with a continuous foliation $\mathscr{F}_N$. Suppose that every leaf is a compact strictly convex projective manifold and that the structure varies continuously in the transverse direction. Let $\nu$ be a quasi-invariant transverse measure. Suppose we have $f:(N,\mathscr{F}_N) \rightarrow (M,\mathscr{F}_M)$ a foliation-preserving homeomorphism which is leafwise $C^1$-regular, with transversally continuous derivatives and such that $f_\ast \nu$-almost every leaf of $\mathscr{F}_M$ is a compact real hyperbolic manifold of dimension $n \geq 3$. Then there exist two measures $\mu_N$ and $\mu_M$ which can be written locally as the product of the transverse measures and the Busemann measures on the leaves and such that 
$$
\int_{M}h(L_M)^n d\mu_{M}(y) \leq \int_{N} h(L_N)^n \mathfrak{e}(L_N) d\mu_{N}(x) \ ,
$$ 
where $h( \ \cdot \ )$ and $\mathfrak{e}( \ \cdot \ )$ are the volume entropy and the eccentricity of the leaves. Additionally equality holds if and only if $\nu$-almost every leaf in $\mathscr{F}_N$ is homothetic to its image. 
\end{teor}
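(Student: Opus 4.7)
\emph{Proof plan.} The strategy is to run the Besson--Courtois--Gallot natural map argument leafwise in the version adapted to strictly convex projective manifolds by Adeboye--Bray--Constantine \cite{ABC,BC}, and then to integrate the resulting leafwise inequality against the quasi-invariant transverse measure.

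First I would work leaf by leaf. For a leaf $L_N$ of $\scrF_N$ the universal cover $\widetilde{L}_N$ is identified with a strictly convex bounded domain $\Omega \subset \bbP^n(\bbR)$ on which $\pi_1(L_N)$ acts by projective transformations, and each $\xi \in \partial\Omega$ carries a Busemann function together with the associated Busemann (Patterson--Sullivan type) measure of exponent $h(L_N)$. For the image leaf $L_M = f(L_N)$ the cover is $\hypkn$ in its Riemannian model. The restriction $f|_{L_N}$ lifts to a $\pi_1$-equivariant homeomorphism $\widetilde{f}: \widetilde{L}_N \to \widetilde{L}_M$, and compactness of leaves together with the leafwise $C^1$ regularity provides a continuous equivariant boundary extension $\partial\widetilde{f}: \partial\Omega \to \partial \widetilde{L}_M$.

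Next I would define the leafwise natural map $F_{L_N}: L_N \to L_M$ by barycentering the $\partial\widetilde{f}$-pushforward of the Busemann measure against the Poisson kernel of $\hypkn$, exactly as in \cite{ABC,BC}. The map is equivariant, descends to $L_N$, is homotopic to $f|_{L_N}$ hence of degree one, and satisfies the pointwise Jacobian bound
\[
 h(L_M)^n \, |\mathrm{Jac}\, F_{L_N}|(x) \;\leq\; h(L_N)^n \, \mathfrak{e}(L_N),
\]
the Jacobian being computed with respect to the Busemann volume form on $L_N$ and the hyperbolic volume form on $L_M$. Integrating over $L_N$ and using $|\deg F_{L_N}| = 1$ yields the closed-leaf inequality
\[
 h(L_M)^n \, \vol(L_M) \;\leq\; h(L_N)^n \, \mathfrak{e}(L_N) \, \vol_B(L_N).
\]

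I would then assemble these into the global statement: transversal continuity of the strictly convex projective structures and of the leafwise derivatives of $f$ implies that the Busemann measures, the entropies and the eccentricities depend measurably on the leaf, so the local products $d\mu_N = d\nu \otimes d\vol_B^{L_N}$ and $d\mu_M = d(f_\ast \nu) \otimes d\vol_{L_M}$ patch into globally defined Radon measures on $N$ and $M$. Disintegrating $\mu_M$ along leaves of $\scrF_M$, applying the leafwise bound on each leaf, and pairing with $f_\ast \nu$ produces the claimed inequality. When the integrated inequality is an equality the leafwise inequality is saturated for $\nu$-a.e.\ leaf, and the rigidity part of \cite{ABC,BC} forces each corresponding $F_{L_N}$ to be a homothety, so $L_N$ is homothetic to $f(L_N)$ for $\nu$-a.e.\ leaf. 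The main obstacle lies in this last assembly step: one has to verify that the natural map construction, the product measures and the degree argument behave coherently as the leaf varies, so that the transversal continuity assumptions on $\scrF_N$ and on $f$ are strong enough to support the disintegration and the Fubini-type computation combining the leafwise entropy inequality with the transverse integration.
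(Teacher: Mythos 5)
Your plan is correct in substance and follows the same core strategy as the paper: a leafwise Adeboye--Bray--Constantine natural map obtained by barycentering the pushed-forward Patterson--Sullivan (Busemann) density, the pointwise Jacobian bound $\mathrm{Jac}\,\Phi \leq (h(L_N)/h(L_M))^n\,\mathfrak{e}(L_N)$, surjectivity via the homotopy to $f$, and integration against the quasi-invariant transverse measure, with the equality case reduced to the homothety statement in the Jacobian estimate. The difference is in the assembly step, which you yourself flag as the main obstacle. You propose to first prove the closed-leaf inequality $h(L_M)^n \mathrm{Vol}(L_M) \leq h(L_N)^n \mathfrak{e}(L_N)\mathrm{Vol}_B(L_N)$ and then integrate it leaf-by-leaf via a disintegration of $\mu_M$ and a Fubini-type pairing with $f_\ast\nu$; this is delicate because a leaf meets a transversal in a whole holonomy orbit and $\nu$ is only quasi-invariant, so the per-leaf pairing is not globally well defined without further work. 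The paper avoids this by never isolating a per-leaf volume identity: it glues the leafwise natural maps into a single measurable foliation-preserving map $\overline{\Phi}$ (measurability coming from the transverse continuity of the structures and of the leafwise derivatives of $f$), passes to the Boland--Connell ergodic decomposition, and applies their foliated area formula $\int \mathcal{N}(y)\,d\mu_{M_\alpha} = \int \mathrm{Jac}_x\Phi\, d\mu_{N_\alpha}$ together with $\mathcal{N}\geq 1$ and the pointwise Jacobian bound, so only integrals over $N$ and $M$ against the globally defined product measures ever appear. If you keep your route, you must justify the disintegration and the transverse measurability precisely at that point; alternatively, substituting the foliated area formula on ergodic components for your Fubini step closes the gap. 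One further small point the paper makes explicit: integrability of the right-hand side follows because the leafwise entropy is uniformly bounded by $n-1$ (Crampon) and the eccentricity is uniformly bounded by compactness of $N$ and transverse continuity; you should include this to justify the integration.
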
 

Once can apply the previous theorem for instance to mapping tori associated to an automorphism of a compact strictly convex projective manifold where the projective structure of each fiber is deformed continuously. 

Notice that Theorem \ref{teor:foliation} is not a direct consequence of the main theorem of \cite{boland:connell}, since on the leaves of $\mathscr{F}_N$ we are not considering any Riemannian structure a priori. Additionally, it is worth mentioning that Theorem \ref{teor:foliation} holds also if we assume that the leaves of $N$ are Finsler manifolds instead of strictly convex projectives ones. In this way we obtain also a foliated version of \cite{boland:newberger}. 

The proof of the above theorem is quite easy since it relies on the construction of natural maps given by Adeboye, Bray and Constantine \cite{ABC}. Indeed it is sufficient to construct a natural map for every leaf of $\mathscr{F}_N$ and then glue all the natural maps together to get a global foliation-preserving map. Applying the foliated area formula by Boland and Connell \cite{boland:connell} one immediately concludes. 

\subsection*{Plan of the paper} 

In Section \ref{sec:intro:def} we introduce the background material we need throughout the paper. In Section \ref{sec:hilbert} we recall the notion of Hilbert geometry and we give the definition of strictly convex projective manifold. In Section \ref{sec:ps:theory} we recall the main aspects of the Patterson-Sullivan theory. The latter is necessary to give the definition of natural map. Section \ref{sec:teor} is devoted to the proof of the main theorem. We conclude with some comments about our main result. 

\subsection*{Acknowlegdements} I would like to thank the anonymous referee for her/his suggestions that allowed me to improve the quality of the paper. 

\section{Preliminary definitions}\label{sec:intro:def}

In this introductory section we are going to recall the main notions and results that we will need throughout the paper. We will first recall the definition of strictly convex proper domain $\Omega$ of $\mathbb{P}^n(\mathbb{R})$ and then we move on to the definition of Hilbert geometry. A strictly convex real projective manifold will be a quotient of $\Omega$ by a group $\Gamma$ of Hilbert isometries acting freely and properly discontinuously.

We are going also to recall briefly the work by Crampon \cite{crampon:tesi} about Patterson-Sullivan theory for Hilbert geometries. The existence of a Patterson-Sullivan density associated to a discrete group $\Gamma$ allowed Adeboye, Bray and Constantine \cite{ABC} to mimic the work by Besson, Curtois and Gallot \cite{bcg95,bcg96,bcg98} and to introduce the notion of natural map. We will conclude the section with a brief overview of this theory.

\subsection{Hilbert geometries}\label{sec:hilbert} 

In this section we will recall some elements about Hilbert geometries. Since the following will be a short exposition, we refer the reader for instance either to the Ph.D. thesis by Crampon \cite[Chapter 1]{crampon:tesi} or to the book by Busemann and Kelly \cite{buse:kelly} for more details.

Let $\Omega$ be a domain of $\mathbb{P}^n(\mathbb{R})$, that is an open connected set. We say that $\Omega$ is \emph{convex} if its intersection with any (projective) line is connected. This means that if $\Omega$ contains two points, then it must contain the segment determined by them. A convex domain $\Omega$ is \emph{proper} if it does not intersect some fixed projective hyperplane. Equivalently, $\Omega$ is projectively equivalent to a relatively compact domain in some affine chart. For a convex proper subset $\Omega$, we denote by $\partial \Omega$ its topological boundary. Any hyperplane intersecting $\partial \Omega$ but not $\Omega$ is called \emph{supporting hyperplane}. A proper domain $\Omega$ is \emph{strictly convex} if every supporting hyperplane intersects $\partial \Omega$ exactly in one point. 

Given any convex proper domain $\Omega$, we can define on it a distance as follows. Let $x,y \in \Omega$ be any two points and denote by $L_{xy}$ the line (or a line, if they coincide) determined by $x$ and $y$. If we denote by $a$ and $b$ the points on $\partial \Omega$ obtained by intersecting the boundary with $L_{xy}$, we can define 
$$
d_\Omega(x,y):=\left| \frac{1}{2} \log [a:x:y:b] \right|= \left| \frac{1}{2} \log \frac{ \lVert y-a \rVert \lVert x-b \rVert}{\lVert y - b \rVert \lVert x-a \rVert} \right| \ ,
$$
where $\lVert \ \cdot \ \rVert$ denotes the Euclidean norm in some affine chart where $\Omega$ is relatively compact. The distance $d_\Omega$ is called \emph{Hilbert distance} on $\Omega$.

It is worth noticing that any element $g \in \textup{PGL}(n+1,\bbR)$ which preserves $\Omega$ is automatically an isometry with respect to the Hilbert distance $d_\Omega$, since $g$ preserves the Euclidean cross-ratio. Additionally, in the particular case when $\Omega$ is strictly convex, projective transformations preserving $\Omega$ represent all the possible Hilbert isometries, as shown by de la Harpe \cite{Harpe93}. 

\begin{deft} \label{def:hilbert:geo}
Let $\Omega$ a convex proper domain of $\mathbb{P}^n(\mathbb{R})$. Let $d_\Omega$ the Hilbert distance and let $\textup{PGL}(\Omega)$ be the set of Hilbert isometries of $\Omega$. The triple $(\Omega,d_\Omega,\textup{PGL}(\Omega))$ is called \emph{Hilbert geometry}. 
\end{deft}

A well-known example of Hilbert geometry is given by the Klein model of the real hyperbolic space $\bbH^n$, where the strictly convex proper domain $\Omega$ is an ellipsoid. The latter case is the only one for which the Hilbert distance comes from a Riemannian metric defined on the domain. However, a Hilbert geometry naturally determines a Finslerian structure on the tangent space $T_p\Omega$ at any point $p \in \Omega$. More precisely, for any non-zero tangent vector $v \in T_p \Omega$ we define
$$
F_\Omega(p,v):=\frac{\lVert v \rVert}{2}\left( \frac{1}{\lVert p - p^+ \rVert} + \frac{1}{\lVert p - p^- \rVert} \right) \ ,
$$
where $p^\pm$ are the intersection points on $\partial \Omega$ determined by the line passing through $p$ with direction $v$. Here $\lVert \ \cdot \ \rVert$ is again the Euclidean norm in some affine chart. 

By defining the length of a $C^1$ curve $c:[0,1] \rightarrow \Omega$ as
$$
\ell(c):=\int_0^1 F_\Omega(c(t))dt \ ,
$$
it is easy to verify that the Hilbert distance $d_\Omega$ can be equivalently defined as
$$
d_{\Omega}(x,y)= \inf \{\ell(c)\ | \ \textup{$c$ is a $C^1$ curve}, \ c(0)=x, \ c(1)=y \} .
$$
The Hilbert distance will be said $C^k$-regular, if the Finslerian norm $F_\Omega:T \Omega \setminus \{0 \} \rightarrow \bbR$ is a $C^k$-regular function. Notice that the regularity of the latter is strictly related to the regularity of the boundary $\partial \Omega$. 

We are going to conclude this brief exposition about Hilbert geometries by the notion of divisible domain. 

\begin{deft}\label{def:sconvex:manifold}
Let $\Omega$ be a proper (strictly) convex domain of $\bbP^n(\bbR)$. A \emph{(strictly) convex real projective manifold} $Y$ is given by the quotient $Y=\Gamma \backslash \Omega$, where $\Gamma \leq \textup{PGL}(\Omega)$ is a group of projective transformations acting freely and properly discontinuously. We say that $\Omega$ is \emph{divisible} if $Y$ is compact. 
\end{deft}

An equivalent approach to convex real projective manifolds is given by the notion of geometric structure in the sense of Thurston \cite[Chapter 3]{thurston}. Indeed a manifold $Y$ is a convex real projective manifold if and only if it admits a $(\Omega,\textup{PGL}(\Omega))$-structure. Clearly real hyperbolic manifolds are particular examples of convex real projective manifolds. Other examples are given us by Coxeter groups studied by Kac and Vinberg \cite{KV67} and by deformations of hyperbolic lattices introduced by Johnson and Milson \cite{JM87}. For convex projective manifolds of finite volume, other examples have been recently obtained by Ballas and Marquis \cite{ballas:marquis} and by Ballas and Casella \cite{ballas:casella}. It is worth noticing that strictly convex projective structure are more general than hyperbolic ones. In fact there exist compact manifolds which admit strictly convex projective structures but no hyperbolic ones. Such examples are given for instance by Benoist \cite{Ben06} and Kapovich \cite{Kap07}.

In the case of compact manifolds, Benoist \cite{Ben04} proved that the condition of strict convexity of $\Omega$ is actually equivalent to requiring that the boundary $\partial \Omega$ is $C^1$-regular, and both conditions imply that the metric space $(\Omega,d_\Omega)$ is Gromov-hyperbolic. Similar results were later obtained also in the finite volume case by Cooper, Long and Tillman \cite[Theorem 0.15]{CLT15}. 

\subsection{Patterson-Sullivan theory for compact Hilbert manifolds}\label{sec:ps:theory} The following section is devoted to recall the main concepts about Patterson-Sullivan theory and their application to the definition of natural map introduced by Adeboye, Bray and Constantine \cite{ABC,BC}. We mainly refer to \cite{bray,crampon:tesi,ABC,BC}.

Let $\Omega$ be a strictly convex proper domain $\Omega$ of $\bbP^n(\bbR)$ and let $\Gamma$ be a group acting freely and properly discontinuously on $\Omega$ by projective transformations. We are going to suppose that the quotient $Y=\Gamma \backslash \Omega$ has finite volume  with respect to the Hilbert volume introduced in \cite{ABC}. As already noticed in Section \ref{sec:hilbert}, under the previous assumptions the geodesic boundary of $\Omega$ coincides with its topological boundary $\partial \Omega$, which is $C^1$-regular. Additionally any point of the boundary is \emph{smooth} (it has only one supporting hyperplane) and \emph{extremal} (since it is not contained in any line segment). Recall that for smooth extremal points in $\partial \Omega$ it is well-defined the notion of \emph{Busemann function} (whereas a priori there could be two Busemann functions as noticed by Bray \cite{bray}). More precisely, if we fix a basepoint $p \in \Omega$, we set
$$
\beta_p: \Omega \times \partial \Omega \rightarrow \bbR \ , \ \ \ \ \beta_p(x,\xi):=\lim_{t \to \infty} d_\Omega(x,c(t))-d_\Omega(p,c(t)) \ ,
$$
where $c$ is any path such that $c(0)=p$ and $c(\infty)=\xi$. The quantity $\beta_p(x,\xi)$ is nothing else than the signed distance between the point $x$ and the horosphere passing throgh $\xi$ and based at $p$, that is 
$$
\calH^\Omega_\xi(p):= \{ y \in \Omega \ | \ \beta_p(y,\xi)=0 \} \ .
$$

Busemann functions are a fundamental tool in order to introduce the notion of Patterson-Sullivan density, but before speaking about that, we need to give the following

\begin{deft}\label{def:critical:exponent}
Let $\Omega$ be a strictly convex proper domain and suppose that $\Gamma \leq \textup{PGL}(\Omega)$ acts freely and properly discontinuously with finite covolume. The \emph{critical exponent} $\delta_\Gamma$ associated to $\Gamma$ is defined as
$$
\delta_\Gamma:=\inf \{ s > 0  \ | \  \calP(x;s):=\sum_{\gamma \in \Gamma} e^{-sd_\Omega(x,\gamma(x))} < \infty \} \ , 
$$
for any point $x \in \Omega$. 
\end{deft}

One can prove that the definition is independent of the choice of the basepoint $x$. The function $\calP(x,s)$ is called \emph{Poincar\'e series} based at $x$. In the case that the Poincar\'e series diverges for $s=\delta_\Gamma$ we are going to say that $\Gamma$ is \emph{divergent}, otherwise it is called \emph{convergent}. For strictly convex domains, Crampon and Marquis \cite{Cra09,CM14} proved that $\delta_\Gamma$ is positive and it coincides with the volume growth entropy.

In the following definition we are going to denote by $\calM^1(\partial \Omega)$ the set of Borel probability measures on the boundary $\partial \Omega$.

\begin{deft}\label{def:patterson:sullivan}
Let $\alpha$ be a positive real number. An $\alpha$-\emph{dimensional Busemann density} is a measurable function 
$$
\mu:\Omega \rightarrow \calM^1(\partial \Omega), \ \ \mu(x)=\mu_x \ ,
$$
which satisfies the following properties:
\begin{enumerate}
	\item It is $\Gamma$-invariant, that is $\gamma_\ast \mu_x=\mu_{\gamma(x)}$, where $\gamma_\ast$ denotes the push-forward measure. 
	\item For every $x,y \in \Omega$ and $\xi \in \partial \Omega$, the measures $\mu_x$ and $\mu_y$ are absolutely continuous and it holds
$$
\frac{d\mu_y}{d\mu_x}(\xi)=e^{-\alpha \beta_x(y,\xi)} \ ,
$$
where $\beta_x(\cdot \ , \ \cdot)$ is the Busemann function pointed at $x$. 
\end{enumerate}
When $\alpha=\delta_\Gamma$ we are going to call $\mu$ \emph{Patterson-Sullivan density} associated to $\Gamma$.
\end{deft}

The generalization of Patterson-Sullivan theory to strictly-convex domains can be found for instance in \cite[Chapter 4.2]{crampon:tesi}. 

Patterson-Sullivan densities were used by Adeboye, Bray and Constantine \cite{ABC,BC} to introduce the notion of natural map in the spirit of Besson, Courtois and Gallot \cite{bcg95,bcg96,bcg98} and to prove a rigidity result for strictly convex real projective structures on a finite volume manifold admitting a hyperbolic structure. Before recalling the construction of the natural map, we need to fix some notation. We are going to denote by $Y=\Gamma \backslash \Omega$ a finite volume strictly convex real projective manifold and by $X=\Gamma \backslash \calE$ the same manifold with the hyperbolic structure coming from quotienting an ellipsoid $\calE$. Let $p \in \Omega$ and $o \in \calE$ be two base-points. We are going to use $\Omega$ and $\calE$ as superscripts to distinguish objects living in different spaces, such as Busemann functions. 

Given any Borel probability measure $\nu \in \calM^1(\partial \calE)$, one can define the following function
$$
\calB_\nu:\calE \rightarrow \bbR\ , \ \ \ \calB_\nu(x):=\int_{\partial \calE} \beta^{\calE}_o(x,\xi)d\nu(\xi) \ .
$$
Since Busemann functions are convex, the above function is strictly convex provided that the measure does not have any atom with weight greater than or equal to $1/2$. This implies that $\calB_\nu$ attains a unique minimum, as shown in \cite[Appendix A]{bcg95}. Thus in the previous situation one can define the \emph{barycenter of $\nu$} as
$$
\textup{bar}_{\calB}(\nu):=\textup{argmin}( \calB_\nu ) \ ,
$$
where the subscript $\calB$ refers to the dependence of the barycenter on the Busemann functions on $\calE$. Since both $Y$ and $X$ are actually the same manifold with different geometric structures, there exists a homeomorphism $f:Y \rightarrow X$ which can be lifted to the universal cover $\tilde f: \Omega \rightarrow \calE$. It is well-know that such a map $\tilde f$ induces a homeomorphism $\varphi:\partial \Omega \rightarrow \partial\calE$ between the geodesic boundaries of the universal covers (here we are exploiting the fact that $\Omega$ is a strictly convex domain). The boundary map $\varphi$ allows to define the \emph{natural map} as follows
$$
\tilde \Phi:\Omega \rightarrow \calE, \ \ \ \tilde\Phi(a):=\textup{bar}_{\calB}(\varphi_\ast(\mu_a)) \ ,
$$
where $\varphi_\ast(\mu_a)$ has no atoms since $\varphi$ is a homeomorphism. Hence we can apply the barycenter and the map $\Phi$ is well-defined. Additionally, since $\varphi$ is $\Gamma$-equivariant, the equivariance of the barycenter construction implies that $\tilde \Phi$ is $\Gamma$-equivariant as well and it induces a map $\Phi:Y \rightarrow X$. 

Using the implicit equation associated to the barycenter, Adeboye, Bray and Constantine \cite{ABC} proved that $\Phi$ is actually a smooth map whose Jacobian satisfies 
\begin{equation}\label{eq:jacobian:bound}
\textup{Jac}_a\Phi \leq \left( \frac{h(F_\Omega)}{h(g_0)} \right)^n \mathfrak{e}(F_\Omega) \ .
\end{equation}
for every $a \in \Omega$. Here $h(F_\Omega)$ is the volume entropy and $\mathfrak{e}(F_\Omega)$ is the eccentricity of the Finsler structure associated to the Hilbert geometry on $\Omega$. For more details about those numbers we refer to \cite{boland:newberger}. Additionally the equality in Equation \eqref{eq:jacobian:bound} is attained if and only if $D_a\Phi$ is a homothety between the tangent spaces. 

\section{Proof of the main theorem}\label{sec:teor}

In this section we are going to prove our main theorem. In order to do this we are going to fix once and for all the notation of the section. Let $(N,\mathscr{F}_N)$ be a compact manifold with a foliation $\mathscr{F}_N$ whose leaves are compact strictly convex projective manifolds. Similarly, let $(M,\mathscr{F}_M)$ be a compact manifold with a foliation $\mathscr{F}_M$ whose leaves are compact hyperbolic manifolds of dimension bigger than or equal to $3$. We are going to denote by $L_N$ (respectively $L_M$) a generic leaf of the foliation $\mathscr{F}_N$ (respectively $\mathscr{F}_M$). 

Suppose we have a foliation-preserving homeomorphism $f:(N,\mathscr{F}_N) \rightarrow (M,\mathscr{F}_M)$  whose restriction to leaves is $C^1$-regular, that is $f:L_N \rightarrow L_M$ is a $C^1$-map.

Following the line of \cite[Section 3]{boland:connell}, we are going to prove the existence of a foliation-preserving natural map. More precisely, we are going to prove the following

\begin{prop}\label{prop:natural:map}
There exists a measurable map $\Phi:(N,\mathscr{F}_N) \rightarrow (M,\mathscr{F}_M)$ which is foliation-preserving and it is $C^1$-regular when restricted to leaves. Additionally the leafwise Jacobian of such a restriction is uniformly bounded. 
\end{prop}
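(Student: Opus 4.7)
The plan is to build $\Phi$ leafwise by applying the Adeboye--Bray--Constantine natural map construction recalled in Section \ref{sec:ps:theory} to each leaf of $\mathscr{F}_N$, and then to show that the resulting assignment glues into a globally measurable map on $N$ with uniformly bounded leafwise Jacobian.

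Fix a generic leaf $L_N = \Gamma_N \backslash \Omega_N$ of $\mathscr{F}_N$, where $\Omega_N \subset \mathbb{P}^n(\mathbb{R})$ is strictly convex proper and $\Gamma_N \leq \textup{PGL}(\Omega_N)$. Since $f$ is foliation-preserving, the image $L_M = f(L_N) = \Gamma_M \backslash \mathcal{E}$ is a hyperbolic leaf of $\mathscr{F}_M$, with $\mathcal{E}$ an ellipsoid. The leafwise restriction $f|_{L_N}$ is a $C^1$-homeomorphism onto $L_M$, hence lifts to $\tilde f : \Omega_N \to \mathcal{E}$ and induces a boundary homeomorphism $\varphi_{L_N} : \partial \Omega_N \to \partial \mathcal{E}$ via Gromov-hyperbolicity and strict convexity. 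Using Crampon's Patterson--Sullivan density $\mu^{L_N}$ on $\partial \Omega_N$ and the barycenter construction on $\mathcal{E}$, I define the leafwise natural map
$$
\tilde\Phi_{L_N}:\Omega_N \to \mathcal{E}, \qquad \tilde\Phi_{L_N}(a) := \barb\bigl((\varphi_{L_N})_\ast \mu^{L_N}_a\bigr),
$$
which descends to $\Phi_{L_N} : L_N \to L_M$. By \cite{ABC} this map is $C^1$ and its Jacobian satisfies the pointwise estimate \eqref{eq:jacobian:bound}. Setting $\Phi|_{L_N} := \Phi_{L_N}$ for each leaf yields a well-defined, foliation-preserving map $\Phi : N \to M$, whose restriction to each leaf of $\mathscr{F}_N$ is $C^1$ by construction.

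It remains to verify global measurability and the uniform Jacobian bound. For measurability, I argue that every ingredient in the ABC construction varies measurably in the transverse direction. The hypothesis that the strictly convex projective structure varies continuously transversely implies that the relevant data on $\Omega_N$ (Hilbert distances, supporting hyperplanes, Busemann functions) depend continuously on the transverse parameter on foliation charts, as does the Poincar\'e series through $\Gamma_N$; this yields continuity of the critical exponent $\delta_{\Gamma_N}$ by Crampon--Marquis and, passing through the Patterson limit procedure, transverse measurability of the Patterson--Sullivan densities $\mu^{L_N}_a$. The assumption that $f$ has transversally continuous leafwise derivatives transfers this measurability to the boundary maps $\varphi_{L_N}$ and hence, via push-forward and barycenter, to $\Phi$. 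For the Jacobian bound, compactness of $N$ together with the transverse continuity just discussed implies that both the leafwise volume entropy $h(F_{\Omega_N})$ and the eccentricity $\mathfrak{e}(F_{\Omega_N})$ are continuous, hence uniformly bounded, functions on $N$; since $h(g_0) = n-1$ is constant, the estimate \eqref{eq:jacobian:bound} gives the desired uniform control on the leafwise Jacobian of $\Phi$.

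The principal obstacle is the transverse measurability issue in the previous paragraph: the Patterson--Sullivan density is defined on each leaf through a limiting procedure of $\Gamma$-weighted sums of Dirac masses on $\Omega$, and one must check that the limit densities can be chosen in a transversally measurable way. Here the assumption that the projective structures vary continuously in the transverse direction is crucial, since it renders the Poincar\'e series and critical exponents continuous along transversals and thus permits a measurable selection of limit densities, as in the foliated setting of \cite{boland:connell}. Once this is in hand, the remaining verifications are formal consequences of the constructions recalled in Section \ref{sec:intro:def}.
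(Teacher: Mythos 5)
Your construction follows essentially the same route as the paper: build the Adeboye--Bray--Constantine natural map leaf by leaf from the boundary extension of the lifted homeomorphism and the leafwise Patterson--Sullivan densities, glue the leafwise maps into a foliation-preserving map, and deduce transverse measurability from the transverse continuity of the projective structures and of the leafwise derivatives of $f$. On those points your argument matches the paper's proof (and its level of detail on the measurability of the Patterson limit procedure is comparable).

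The one step where your justification does not hold up is the uniform bound on the leafwise Jacobian. You claim that the leafwise volume entropy $h(F_{\Omega_N})$ is a continuous, hence bounded, function on $N$ because the structures vary continuously in the transverse direction. But the volume entropy is a global asymptotic invariant of the entire leaf, not a pointwise quantity: transverse continuity of the Finsler data on foliation charts does not by itself give continuity of the entropy of the leaf through a point, since nearby leaves of a continuous foliation need not resemble each other globally. Likewise, Crampon--Marquis prove positivity of the critical exponent and its identification with the volume growth entropy, not its continuity under deformation, so that citation does not support a measurable or continuous dependence either. The gap is easy to close, and it is exactly what the paper does in Remark \ref{oss:integrability:jacobian:transverse:measure}: by \cite[Theorem 1.1]{Cra09} every compact strictly convex projective $n$-manifold has entropy at most $n-1$, while $h(L_M)=n-1$ on the hyperbolic leaves, so the entropy ratio in \eqref{eq:jacobian:bound} is uniformly bounded by $1$ with no continuity argument needed; only the eccentricity $\mathfrak{e}(L_N)$ requires the compactness-plus-transverse-continuity argument, and that term is controlled by a pointwise quantity which genuinely does vary continuously on the compact manifold $N$. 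With that substitution your proof is complete and coincides with the paper's.
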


\begin{proof}
Since the restriction $f:L_N \rightarrow L_M$ to a generic leaf is a homeomorphism, following \cite{ABC,BC}, we have that its lift $\widetilde f: \widetilde{L}_N \rightarrow \widetilde{L}_M$ to the universal covers admits an extension to the visual boundaries $$\varphi:\partial \widetilde{L}_N \rightarrow \partial \widetilde{L}_M \ , $$ which is a homeomorphism. Thus we can define 
$$
\widetilde{\Phi}:\widetilde{L}_N \rightarrow \widetilde{L}_M \ , \ \ \ \widetilde{\Phi}(a):=\textup{bar}_\mathcal{B}(\varphi_\ast \mu_a) \ .
$$
where $\varphi_\ast \mu_a$ is the push-forward through $\varphi$ of the Patterson-Sullivan measure pointed at $a$. Notice that the map $\Phi$ is well-defined since $\varphi$ is a homeomorphism and hence $\varphi_\ast \mu_a$ has no atoms. Being $\varphi$ equivariant with respect to the action of the fundamental groups of $L_N$ and $L_M$ on the boundaries, the map $\widetilde{\Phi}$ is equivariant too, by the properties of the barycenter. Hence $\widetilde{\Phi}$ naturally descends to a map 
$$
\Phi:L_N \rightarrow L_M \ . 
$$
Being defined through the barycenter construction, the map $\Phi$ is $C^1$-regular with Jacobian bounded by 
\begin{equation}\label{eq:bound:jacobian:natural:leaf}
\textup{Jac}_a\Phi \leq \left(\frac{h(L_N)}{h(L_M)}\right)^n \mathfrak{e}(L_N) \ .
\end{equation}
where $h(L_N)$ (respectively $h(L_M)$) is the volume entropy associated to the Finsler structure on the leaf $L_N$ (respectively $L_M$), and $\mathfrak{e}(L_N)$ is the eccentricity. 

Notice that the same results contained in \cite[Section 5]{BC} show that $\Phi$ is (properly) homotopic to $f$ and hence $\Phi$ is surjective, having the same degree of $f$ (compare also with \cite[Theorem 3.3]{boland:connell} and \cite{boland05}).

Combining together all the maps we obtained, we get a map $$\overline{\Phi}:(N,\mathscr{F}_N) \rightarrow (M,\mathscr{F}_M) \ , $$ 
which is foliation-preserving and it is a bijection on the leaf spaces (since $f$ it is). Additionally $\overline{\Phi}$ is measurable by the transverse continuity of the metrics on the leaves. Indeed $\overline{\Phi}$ is defined in terms of the Busemann functions associated to the strictly convex structure given on each leaf. Since the Busemann functions varies measurably as the given projective structure varies continuosly and the same holds for the leafwise boundary extension $\varphi$ since we assumed $f$ transversally continuous, we get the desired measurability of $\overline{\Phi}$ (see also \cite{boland:connell}). 
\end{proof}

\begin{oss}\label{oss:integrability:jacobian:transverse:measure}
Since we are going to exploit it in the proof of the main theorem, it is worth noticing the Equation \eqref{eq:bound:jacobian:natural:leaf} implies that the laefwise Jacobian is actually uniformly bounded. Indeed the leafwise entropy $h( L_N )$ is uniformly bounded from above by $(n-1)$, where $n$ is the dimension of the leaf (see \cite[Theorem 1.1]{Cra09}). Additionally since $N$ is compact and the eccentricity varies continuously with respect to the leaves, then also the leafwise eccentricity $\mathfrak{e}(L_N)$ is uniformly bounded from above. 

This will guarantee that both the leawise Jacobian and the bound on the right appearing in Equation \eqref{eq:bound:jacobian:natural:leaf} are actually integrable functions with respect to the transverse quasi-invariant measure on $N$. 
\end{oss}

We need now to recall that by the work of Zimmer \cite[Theorem 2.1]{Zim82} on $(N,\mathscr{F}_N)$ there exists a \emph{transverse quasi-invariant measure} $\nu_N$. The notion of quasi-invariance refers to the holonomy associated to the foliation and for more details about the definition one can see for instance \cite{Zim82}. As noticed by Boland and Connell \cite{boland:connell} using the measure $\nu_N$ one can provide a global finite measure $\mu_N$ on $N$ which is locally the product of the Busemann measure on the leaves of $\mathscr{F}_N$ and of the measure $\nu_N$. 

One can exploit the natural map $\Phi$ constructed in Proposition \ref{prop:natural:map} to define in a similar way a measure on $M$. More precisely, set $$\nu_M=\Phi_\ast \nu_N \ ,$$
the push-forward measure of the transverse measure $\nu_N$ through the map $\Phi$. The measure $\nu_M$ is a transverse quasi-invariant measure and hence it can be used to define a global finite measure $\mu_M$ on $M$ which is locally the product of the Riemannian measure on the leaves of $\mathscr{F}_M$ and of the measure $\nu_M$. 

Now we have all we need to prove the main theorem.

\begin{proof}[Proof of Theorem \ref{teor:foliation}]
 
We are going to follow the line of \cite{boland:connell}. Thanks to the map $f:(N,\mathscr{F}_N) \rightarrow (M,\mathscr{F}_M)$ we know that we can define the natural map $$\Phi:(N,\mathscr{F}_N) \rightarrow (M,\mathscr{F}_M) \ , $$ and the global measures $\mu_N$ and $\mu_M$ described above. Let $(N_\alpha,\mathscr{F}_{N_\alpha},\nu_\alpha)$ be the ergodic decomposition described by Boland-Connell \cite{boland:connell} on $(N,\mathscr{F}_N)$ and let $(M_\alpha,\mathscr{F}_{M_\alpha},\Phi_\ast \nu_\alpha)$ be the corresponding decomposition on $(M,\mathscr{F}_M)$. Let $\mu_{N_\alpha}$ and $\mu_{M_\alpha}$ be the global measures associated to the ergodic components. 

Given $y \in M$, we denote by $\mathcal{N}(y)$ the cardinality of the preimage $\Phi^{-1}(y)$. It is worth noticing that the proof of foliated area formula \cite[Proposition 5.1]{boland:connell} holds mutatis mutandis in our context. As a consequence we get the following chain of inequalities 
\begin{align*}
\int_{M_\alpha}d\mu_{M_\alpha}(y) & \leq \int_{M_\alpha} \mathcal{N}(y)d\mu_{M_\alpha}(y) = \\
&= \int_{N_\alpha} \textup{Jac}_x\Phi \ d\mu_{N_\alpha}(x) \leq \\
& \leq \int_{N_\alpha} \left( \frac{h|_{N_\alpha}(L_N)}{h|_{M_\alpha}(L_M)} \right)^n\mathfrak{e}|_{N_\alpha}(L_N) d\mu_{N_\alpha}(x) \ ,
\end{align*}
where the first inequality is due to the surjectivity of $\Phi$ and we moved from the second line to the third one using the estimate on the Jacobian of $\Phi$. Notice that the last term is integrable as a consequence of Remark \ref{oss:integrability:jacobian:transverse:measure}. 

Since $h|_{M_\alpha}(L_M)$ is actually constant, we can rewrite
$$
\int_{M_\alpha}h(L_M)^n d\mu_{M_\alpha}(y) \leq \int_{N_\alpha} h|_{N_\alpha}(L_N)^n \mathfrak{e}|_{N_\alpha}(L_N) d\mu_{N_\alpha}(x) \ , 
$$
and integrating with respect to $\alpha$ we obtain
\begin{equation}\label{equation:rigidity}
\int_{M}h(L_M)^n d\mu_{M}(y) \leq \int_{N} h(L_N)^n \mathfrak{e}(L_N) d\mu_{N}(x) \ , 
\end{equation}
as desired. 

Assume now that equality holds in Equation \eqref{equation:rigidity}. For almost every $\alpha$ it must hold 
$$
\textup{Jac}_x(\Phi|_{N_\alpha}) = \left( \frac{h|_{N_\alpha}(L_N)}{h|_{M_\alpha}(L_M)} \right)^n\mathfrak{e}|_{N_\alpha}(L_N) \ ,
$$
almost everywhere with respect to $\mu_{N_\alpha}$. Thus we must have 
$$
\textup{Jac}_x(\Phi|_{N_\alpha}) = \left( \frac{h|_{N_\alpha}(L_N)}{h|_{M_\alpha}(L_M)} \right)^n\mathfrak{e}|_{N_\alpha}(L_N) 
$$
for $\nu_{N_\alpha}$-every leaf, and hence $D_x\Phi|_{N_\alpha}$ is a homotethy. This implies that $D_x\Phi$ is a homotethy $\nu_N$-almost every leaf, and the theorem is proved. 

\end{proof}

As already noticed in the introduction the previous theorem gives an adaptation of the main theorem of Boland and Connell \cite{boland:connell} in the case of foliation by strictly convex projective manifolds. Theorem \ref{teor:foliation} is not a direct consequence of such result because here we are not assuming any Riemannian structure on the leaf $L_N$ and hence there is no precise meaning of Patterson Sullivan structure in the sense of \cite{boland:connell}. Additionally, one can notice that following the same proof of Theorem \ref{teor:foliation} and using the constructions by Boland and Newberger \cite{boland:newberger} it is possible to replace the strictly convex projective structure on the leaves of $\mathscr{F}_N$ with a Finsler structure. 

\bibliographystyle{amsalpha}

\bibliography{biblionote}

\providecommand{\bysame}{\leavevmode\hbox to3em{\hrulefill}\thinspace}
\providecommand{\MR}{\relax\ifhmode\unskip\space\fi MR }
\providecommand{\MRhref}[2]{%
  \href{http://www.ams.org/mathscinet-getitem?mr=#1}{#2}
}
\providecommand{\href}[2]{#2}
\begin{thebibliography}{BCG98}

\bibitem[ABC19]{ABC}
I.~Adeboye, H.~Bray, and D.~Constantine, \emph{Entropy rigidity and hilbert
  volume}, Discrete Contin. Dyn. Syst. Ser. A \textbf{39} (2019), no.~4,
  1731--1744.

\bibitem[BCa]{ballas:casella}
S.~A. Ballas and A.~Casella, \emph{Gluing equations for real projective
  structures on $3$-manifolds}, preprint,
  \texttt{https://arxiv.org/pdf/1912.12508.pdf}.

\bibitem[BCb]{BC}
H.~Bray and D.~Constantine, \emph{Entropy rigidity for finite volume strictly
  convex projective manifolds}, preprint,
  \texttt{https://arxiv.org/pdf/2006.13619.pdf}.

\bibitem[BC02]{boland:connell}
J.~Boland and C.~Connell, \emph{Minimal entropy rigidity for foliations of
  compact spaces}, Israel J. Math. \textbf{128} (2002), 221--246.

\bibitem[BCG95]{bcg95}
G.~Besson, G.~Courtois, and S.~Gallot, \emph{Entropies et rigidit\'es des
  espaces localmentes sym\'etriques de courboure strictement n\'egative}, Geom.
  Func. Anal \textbf{5} (1995), no.~5, 731--799.

\bibitem[BCG96]{bcg96}
\bysame, \emph{Minimal entropy and mostow's rigidity theorems}, Ergodic Theory
  Dynam. Systems \textbf{16} (1996), 623--649.

\bibitem[BCG98]{bcg98}
\bysame, \emph{A real schwarz lemma and some applications}, Rend. Mat. Appl.
  \textbf{18} (1998), no.~2, 381--410.

\bibitem[BCS05]{boland05}
J.~Boland, C.~Connell, and J.~Souto, \emph{Volume rigidity for finite volume
  manifolds}, Amer. J. Math. \textbf{127} (2005), no.~3, 535--550.

\bibitem[Ben00]{Ben00}
Y.~Benoist, \emph{Automorphismes des cones convexes}, Invent. Math.
  \textbf{141} (2000), no.~1, 149--193.

\bibitem[Ben04]{Ben04}
\bysame, \emph{Convexes divisibles 1}, Tata Inst. Fund. Res. Stud. Math.
  \textbf{17} (2004), 339--374.

\bibitem[Ben06]{Ben06}
\bysame, \emph{Convexes hyperboliques et quasi-isometries}, Mat. Zametki
  \textbf{122} (2006), 109--134.

\bibitem[BK53]{buse:kelly}
Herbert Busemann and Paul~J. Kelly, \emph{Projective geometry and projective
  metrics}, Academic Press Inc, 1953.

\bibitem[BM]{ballas:marquis}
S.~A. Ballas and L.~Marquis, \emph{Properly convex bending of hyperbolic
  manifolds}, preprint, \texttt{https://arxiv.org/pdf/1609.03046.pdf}.

\bibitem[BN01]{boland:newberger}
J.~Boland and F.~Newberger, \emph{Minimal entropy rigidity for finsler
  manifolds of negative flag curvature}, Ergodic Theory Dynam. Systems
  \textbf{21} (2001), no.~1, 13--23.

\bibitem[Bra]{bray}
H.~Bray, \emph{Geodesic flow of nonstrictly convex hilbert geometries},
  preprint, https://arxiv.org/pdf/1710.06938.pdf.

\bibitem[CF03a]{connellfarb2}
C.~Connell and B.~Farb, \emph{The degree in higher rank}, J. Differential Geom.
  \textbf{65} (2003), no.~1, 19--59.

\bibitem[CF03b]{connellfarb1}
\bysame, \emph{Minimal entropy rigidity for lattices in products of rank one
  symmetric spaces}, Comm. Anal. Geom. \textbf{11} (2003), no.~5, 1001--1026.

\bibitem[CF17]{connellfarb4}
\bysame, \emph{Erratum for "the degree in higher rank"}, J. Differential Geom.
  \textbf{105} (2017), no.~1, 21--32.

\bibitem[CL]{connell:li}
C.~Connell and Z.~Li, \emph{Foliated entropy rigidity}, preprint.

\bibitem[CLT15]{CLT15}
D.~Cooper, D.~D. Long, and S.~Tillman, \emph{On convex projective manifolds and
  cusps}, Advances in Math. \textbf{277} (2015), 181--251.

\bibitem[CM14]{CM14}
M.~Crampon and L.~Marquis, \emph{Let flot g\'eod\'esique des quotients
  g\'eom\'etriquement finis des g\'eom\'etries de hilbert}, Pacific J. Math
  \textbf{268} (2014), no.~2, 313--369.

\bibitem[Cra09]{Cra09}
M.~Crampon, \emph{Entropies of strictly convex projective manifolds}, J. Mod.
  Dyn. \textbf{3} (2009), no.~4, 511--547.

\bibitem[Cra11]{crampon:tesi}
\bysame, \emph{Dynamics and entropies of hilbert metrics}, Ph.D. thesis,
  Universit\'e de Strasbourg, 2011.

\bibitem[DE86]{douady:earle}
A.~Douady and C.~J. Earle, \emph{Conformally natural extension of
  homeomorphisms of the circle}, Acta Math. \textbf{157} (1986), no.~1-2,
  23--48.

\bibitem[dlH93]{Harpe93}
P.~de~la Harpe, \emph{On hilbert's metric for simplices}, Geometric group
  theory, Vol. 1, London Math. Soc. Lecture Note Ser., vol. 181, Cambridge
  Univ. Press, Cambridge, 1993, pp.~97--119.

\bibitem[FK06]{franc06:articolo}
S.~Francaviglia and B.~Klaff, \emph{Maximal volume representations are
  fuchsian}, Geom.~Dedicata \textbf{117} (2006), 111--124.

\bibitem[FS18]{savini:articolo}
S.~Francaviglia and A.~Savini, \emph{Volume rigidity at ideal points of the
  character variety of hyperbolic 3-manifolds}, to appear on Ann. Sc. Norm.
  Super. Pisa Cl. Sci. (2018).

\bibitem[JM87]{JM87}
D.~Johnson and J.~J. Millson, \emph{Deformation spaces associated to compact
  hyperbolic manifolds}, Discrete groups in geometry and analysis, Progr.
  Math., vol.~67, Birkhauser Boston, 1987, pp.~48--106.

\bibitem[Kap07]{Kap07}
M.~Kapovich, \emph{Convex projective structures on gromov-thurston manifolds},
  Mat. Zametki \textbf{11} (2007), 1777--1830.

\bibitem[KV67]{KV67}
V.~G. Kac and E.~B. Vinberg, \emph{Quasi-homogeneous cones}, Mat. Zametki
  \textbf{1} (1967), 347--354.

\bibitem[Mar75]{margulis:ext}
G.~A. Margulis, \emph{Non-uniform lattices in semisimple algebraic groups}, Lie
  groups and their representations (Proc.~Summer School on Group
  Representations of the Bolyai J\'{a}nos Math. Soc., Budapest, 1971 (1975),
  371--553, Halsted, New York.

\bibitem[Sav]{savini:natural}
A.~Savini, \emph{Natural maps for measurable cocycles of compact hyperbolic
  manifolds}, preprint, https://arxiv.org/pdf/1909.07712.pdf.

\bibitem[Sav18]{savini2:articolo}
A.~Savini, \emph{Rigidity at infinity for lattices in rank-one lie groups}, to
  appear on J. Top. Anal. (2018).

\bibitem[Sto06]{St06}
P.~A. Storm, \emph{The minimal entropy conjecture for non uniform rank one
  lattices}, Geom. Func. Anal. \textbf{16} (2006), 959--980.

\bibitem[Thu79]{thurston}
W.~P. Thurston, \emph{The geometry and topology of $3$-manifolds}, Princeton,
  1979, mimeo\-graphed notes.

\bibitem[Zim80]{zimmer:annals}
R.~J. Zimmer, \emph{Strong rigidity for ergodic actions of semisimple lie
  groups}, Ann.~of Math. \textbf{112} (1980), no.~3, 511--529.

\bibitem[Zim82]{Zim82}
R.~J. Zimmer, \emph{Ergodic theory, semisimple lie groups, and foliations by
  manifolds of negative curvature}, Publications Math\'ematiques de l'Institut
  des Hautes \'Etudes Scientifiques \textbf{55} (1982), 37--62.

\end{thebibliography}

\end{document}